%% file: main.tex
\documentclass[11pt,english]{article}
\usepackage{lmodern}

\usepackage[T1]{fontenc}
\usepackage[utf8]{inputenc}
\usepackage{units}
\usepackage{dsfont}
\usepackage{amsmath}
\usepackage{amsthm}
\usepackage{amssymb}
\usepackage{geometry}
\makeatletter
\numberwithin{figure}{section}
\numberwithin{equation}{section}
\newcommand{\prob}{\mathrm{Pr}}
\@ifundefined{date}{}{\date{}}
\usepackage{float,psfrag,epsfig,color,url,hyperref}
\usepackage{algorithm,algorithmic}
\usepackage{graphicx,relsize}
\usepackage{amssymb,amsfonts,amsmath,amsthm,amscd,dsfont,mathrsfs,mathtools,microtype,nicefrac,pifont}
\usepackage{upgreek}
\usepackage[dvipsnames]{xcolor}
\usepackage{epstopdf,bbm,enumitem}
\usepackage{dsfont,tikz}
\usepackage[mathscr]{euscript}
\usepackage[toc,page]{appendix}
\hypersetup{
  colorlinks,
  linkcolor={red!50!black},
  citecolor={blue!50!black},
  urlcolor={blue!80!black}
}
\usepackage{etoolbox}
\patchcmd{\thebibliography}
  {\settowidth}
  {\setlength{\itemsep}{0pt plus 0.1pt}\settowidth}
  {}{}
\apptocmd{\thebibliography}
  {\small}
  {}{}
\allowdisplaybreaks
\usepackage{custom}
\makeatletter
\renewcommand{\paragraph}{\@startsection{paragraph}{4}{\z@}{1.25ex \@plus 1ex \@minus .2ex}{-1em}{\normalfont\normalsize\bfseries}}
\makeatother

\makeatother

\theoremstyle{plain}
\newtheorem{thm}{\protect\theoremname}[section]

\newtheorem{lem}[thm]{\protect\lemmaname}
\usepackage{babel}
\providecommand{\lemmaname}{Lemma}
\providecommand{\propositionname}{Proposition}
\providecommand{\theoremname}{Theorem}

\begin{document}
\input{math-macros.tex}

\title{A Simpler Proof of the Matrix Spencer Theorem}
\author{Nikhil Bansal\\ University of Michigan\\ \texttt{bansaln@umich.edu} \and Yunbum Kook\\ University of Michigan\\  \texttt{ybkook@umich.edu}}
\maketitle
\begin{abstract}
We give a simple exposition of the Matrix Spencer theorem
due to Akbas and Sra~\cite{AkbasSra26}.    
\end{abstract}

\section{Introduction \label{sec:introduction}}
For a matrix $A$, let $\norm{A}$ denote  its operator norm. 
Recently, Akbas and Sra \cite{AkbasSra26} proved the following remarkable result, generalizing the classical result of Spencer \cite{Spencer85}.  
\begin{thm}[Matrix Spencer]
 \label{thm:spencer} For any real symmetric matrices $A_{1},\ldots,A_{d}\in\Rdd$
with $\norm{A_{i}}\le1$, there are signs $\veps_{1},\ldots,\veps_{d}\in\{-1,1\}$
satisfying $
\norm{\sum^{d}_{i=1}\veps_{i}A_{i}}\lesssim d^{1/2}$.
\end{thm}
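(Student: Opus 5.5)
We will argue by partial coloring: it suffices to prove a partial coloring lemma and then iterate. \textbf{Partial coloring lemma:} for any symmetric $A_1,\dots,A_n\in\Rdd$ with $\norm{A_i}\le 1$ and $n\le d$, there is a fractional coloring $x\in[-1,1]^n$ with at least $n/2$ coordinates in $\{\pm1\}$ and $\norm{\sum_i x_iA_i}\lesssim \sqrt{n}\,\sqrt{1+\log(d/n)}$.

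\textbf{Iteration.} Apply the lemma to the currently uncolored matrices, fix the newly integral coordinates, and recurse on the rest. One can force the intermediate colorings to stay in $[-1,1]$ by applying the lemma to the affine slice of the cube through the current point. After $k$ rounds at most $d/2^k$ variables remain. By the triangle inequality the total norm is at most
\[
\sum_k \sqrt{d2^{-k}}\sqrt{1+k}\lesssim \sqrt d ,
\]
which is a geometric sum. This gives Theorem~\ref{thm:spencer}.

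\textbf{Proving the partial coloring lemma.} I would use a Gaussian measure argument: a Gluskin/Giannopoulos-type lemma, or Rothvoss's projection version, or the Lovett--Meka walk. Let
\[
K=\Big\{x\in\R^n:\ \norm{\textstyle\sum_i x_iA_i}\le C\sqrt{n}\sqrt{1+\log(d/n)}\Big\},
\]
a symmetric convex body. If $\gamma_n(K)\ge e^{-cn}$, then $K\cap[-1,1]^n$ contains a point with $\Omega(n)$ tight coordinates, which is the required partial coloring.

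So everything reduces to a \emph{Gaussian measure lower bound}: for $g\sim N(0,I_n)$,
\[
\Pr\Big[\norm{\textstyle\sum_i g_iA_i}\le C\sqrt{n\log(ed/n)}\Big]\ge e^{-cn}.
\]

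\textbf{Main obstacle.} This lower bound is where all the difficulty lies. Matrix Khintchine only gives $\E\norm{\sum g_iA_i}\lesssim\sqrt{n\log d}$, which is off by a logarithm. Concentration alone cannot beat the expectation, so the small-ball/large-deviation structure must be used. My plan is as follows.

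\begin{enumerate}[label=(\roman*)]
\item \emph{Reduce to a low-rank problem.} Restrict to a subspace $V\subseteq\R^n$ of dimension $n/2$ that is chosen so that $\sum_i A_i^2$ restricted to $V$ behaves well. Conditioning on $g\in V$ costs only a factor $e^{-O(n)}$ in measure.
\item \emph{Use a non-commutative Gaussian comparison.} Bound $\E\,\mathrm{tr}\,\exp(\lambda\sum g_iA_i)$ by $d\,e^{\lambda^2\norm{\sum A_i^2}/2}$. Then trade the $\log d$ coming from the trace against the allowed $e^{-cn}$ measure loss, by tilting the Gaussian along the top eigendirections. Each direction ``costs'' a bounded amount of measure.
\item \emph{Exploit the matrix structure.} Use that at most $n$ independent directions can be tilted, which is where the $\log(d/n)$ rather than $\log d$ should appear.
\end{enumerate}

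I would try a chaining/union bound over a net of unit vectors $u$, controlling $\sum_i g_i\langle u,A_iu\rangle$. However, a naive net on $S^{d-1}$ has size $e^{O(d)}$, which is too large compared to $e^{cn}$. Hence one must restrict to a net of the relevant $O(n)$-dimensional ``active'' subspaces. Making this restriction rigorous is the step I expect to be hardest, and it is presumably where the Akbas--Sra insight enters.
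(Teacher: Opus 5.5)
\textbf{There is a genuine gap: the Gaussian measure lower bound is never proved, and the tools you propose for it cannot deliver it.}

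Your outer reduction is fine and matches the paper's: iterate a partial coloring lemma, and obtain each partial coloring from a Gaussian measure bound $\Pr_{g\sim\gamma_n}[\norm{\sum_i g_iA_i}\le r]\ge e^{-cn}$ via Giannopoulos/Rothvoss. But that measure bound is the entire content of the theorem, and your proposal does not establish it. The tools you suggest cannot establish it, for the following reasons.

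\emph{Upper-tail tools.} Trace-exponential bounds ($\E\Tr e^{\lambda X}\le d\,e^{\lambda^2 n/2}$) and union bounds over nets only estimate $\Pr[\norm{X}>r]$. So they can certify $\norm{X}\le r$ only once $r$ exceeds the typical size of $\norm{X}$. That typical size is $\asymp\sqrt{n\log d}$ already for diagonal $A_i$, i.e.\ scalar Spencer. The event you need is atypical, of probability $e^{-\Theta(n)}$, and no union bound sees such an event.

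\emph{Shifting the Gaussian.} By Anderson's inequality, a translated Gaussian gives the symmetric convex body $K$ no more mass than $\gamma_n$ does. So this kind of ``tilting'' gains nothing.

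\emph{Product bounds over a net.} Replacing the union bound by a \v{S}id\'ak/Gaussian-correlation product $\prod_{u\in N}\Pr[\abs{u^\T Xu}\le r]$ over a net $N$ requires $\abs{N}e^{-r^2/(2n)}\lesssim n$, i.e.\ $r\gtrsim\sqrt{n\log(\abs{N}/n)}$. A net of a $k$-dimensional subspace has $e^{\Omega(k)}$ points, which forces $r\gtrsim\sqrt{nk}$. So $\Theta(n)$ ``active'' dimensions already give $r\gtrsim n$, not $\sqrt n$. Shrinking the net to active $O(n)$-dimensional subspaces therefore would not close the gap. In any case, such subspaces need not exist when the $A_i$ have full rank.

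\textbf{What the paper does instead.} It estimates the measure through a family of tilted measures rather than through tail bounds.
\begin{enumerate}
\item It replaces $\ind_{\{\norm{X}<1\}}$, where $X=\frac1r\sum_i g_iA_i$, by the smaller quantity $e^{-V(X)}$. Here $V=b\Tr X^2+\alpha\Tr s(X)$ is a convex, even, reweighted log-det barrier.
\item It sets $F(t)=\log\E_{\gamma_n}e^{-V(tX)}$ and integrates $F'(t)$ from $F(0)=0$.
\item The quantity $-F'(t)$ is a combination of $\E_{\mu_t}\Tr X_t^2$ and $\E_{\mu_t}\Tr(X_t^4(I-X_t)^{-1})$. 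These are moments under the log-concave tilt $\mu_t\propto e^{-V(X_t)}\gamma_n$, which is supported where the barrier is finite.
\item These moments are controlled by the matrix Brascamp--Lieb inequality for the matrix weight $(I-X_t)^{-1}\D\mu_t$, whose curvature is at least $1/2$.
\item Applied to $f=X_tz$, this gives the self-consistent inequality $\lambda_t-1\le 2\eta t^2\lambda_t^2$ for $\lambda_t=\lambda_{\max}(\E_{\mu_t}(I-X_t)^{-1})$. Hence $\E_{\mu_t}X_t^2\preceq 8\eta t^2 I$ with $\eta=n/r^2$. This is an operator bound with no $\log d$ and no net.
\item Choosing $b\asymp\sqrt\eta$ and $\alpha\asymp1/\sqrt\eta$ balances this against the fourth-moment bound, giving $-F'(t)\lesssim\eta^{3/2}d=O(n/L^3)$.
\end{enumerate}

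You are also aiming higher than necessary. The paper only proves partial colorings with discrepancy $O(d^{1/3}n^{1/6})=O(\sqrt d\,(n/d)^{1/6})$, and this still sums geometrically to $O(\sqrt d)$. Your lemma asks for $\sqrt{n(1+\log(d/n))}$. That is stronger than what the paper establishes, and stronger than what Akbas--Sra establish according to the paper's footnote. So even a correct argument along your lines would have to prove more than the theorem needs.
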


The proof of \cite{AkbasSra26} is based on a clever barrier function and interpolation argument, together with a matrix Poincar\'e inequality obtained by the Bochner technique, and is quite technical. We give a substantially shorter analysis using a simpler potential and a recent matrix Brascamp--Lieb inequality due to Cordero-Erausquin \cite{CorderoErausquin18}. While our proof gives weaker results for some intermediate steps, it suffices for Theorem \ref{thm:spencer}, and allows us to highlight the key ideas of~\cite{AkbasSra26} cleanly. 

\paragraph{AI disclosure.} Our proof was obtained after multiple iterations with GPT 6. The exposition is entirely by the authors, and they take full responsibility for the content and correctness.

\paragraph{Acknowledgment.} The authors thank all participants in the reading group on the matrix Spencer theorem at the Simons Institute.
NB was supported in part by NSF Awards CCF-2327011 and CCF-2504995.
YK is supported in part by NSF Awards CCF-2504995 and CCF-2236669.
\section{Partial Coloring}
\label{sec:general}

We use the standard partial coloring method. Initially all coordinates
are zero. In each round, a constant fraction of the remaining (active) coordinates are assigned values in $\{-1,1\}$. When $n \leq d$ active coordinates remain, we will show a partial assignment with discrepancy $O(d^{1/3} n^{1/6}) \lesssim d^{1/2} (n/d)^{1/6}$.\footnote{
Akbas and Sra \cite{AkbasSra26} show a substantially better bound of $O(n^{1/2} \log^2(d/n))$, using more involved calculations.} 
As $(n/d) = 1$ initially, and it shrinks geometrically in each round, this gives a full $\pm 1$ assignment after $O(\log n)$ rounds with discrepancy $\O(d^{1/2})$, proving Theorem \ref{thm:spencer}.

Let $L\gg 1$ be a sufficiently large constant. Henceforth, we fix the number of matrices $n$ and their size $d$, and our goal is to show a partial coloring bound of $r:=L d^{1/3} n^{1/6}$.

\paragraph{Gaussian Volume Estimate.}
Let $\gamma_{n}$ denote standard Gaussian measure on $\Rn$.
By a standard argument\footnote{\cite{Giannopoulos97} shows that any origin-symmetric convex body $\K$ in $\R^n$ with $\gamma_n(\K) \geq \exp(-n/10)$, contains a point $x \in \{-1,0,1\}^n$ with at least $n/4$ nonzeros.
Now, apply this to $\K = \{x: \norm{\sum_i x_i A_i} \leq r\}$.}  of Giannopoulos~\cite{Giannopoulos97}, to show the partial coloring, it suffices to show that 
\begin{equation}
\label{eq1}\prob_{g\sim \gamma_n} \Big[\Bnorm{\sum_{i} g_i A_i} \leq  r \Big]  \geq e^{-n/10}\,.
\end{equation}
Let us rescale and define the random Gaussian matrix $X = X(g):=\frac{1}{r}\,\sum^{n}_{i=1}g_{i}A_{i}.$
Then \eqref{eq1} is same as  
\begin{equation}
    \label{eq2}
  \prob_{\gamma_n}[\norm{X}<1] =  \E_{\gamma_n}[\ind_{\{\norm{X}<1\}}] \geq e^{-n/10}\,.
\end{equation}

\subsection{Modified Logarithmic Barrier}\label{sec:modified-barrier}
As the indicator  function
$\ind_{\{\norm{X}<1\}}$
is non-smooth, we will work with a smooth proxy. 
A natural candidate is $e^{-L(X)}$ where  $L(X)$ is the log-barrier, defined as $-\Tr \log (I-X^2)$ for $\|X\|<1$ and $+\infty$ outside $\{\norm X<1\}$. Note also that $-\Tr \log (I-X^2) = -\log \det (I-X^2)$.

As \[-\log (I-X^2) = X^2+ X^4/2 + X^6/3 + \cdots \qquad \text{for } \|X\|< 1\,,\]
we have $L(X)\geq 0$ for $\|X\|<1$, and thus $e^{-L(X)} \leq  \ind_{\{\norm{X}<1\}}$ for all $X$. 
Thus, to prove \eqref{eq2} it suffices to show that $\E_{\gamma_n}e^{-L(X)} \ge e^{-n/10}$.
Unfortunately, the proof below only gives $\E_{\gamma_n}e^{-L(X)} \geq \exp(-\Omega(d))$, which is too weak for us when $n\ll d$.

Hence, we instead consider a modification of $L(X)$, where we decrease the weight on $X^2$ and increase the weight on the other terms (we give some intuition for this later).\footnote{Akbas and Sra~\cite{AkbasSra26} reweight several Taylor coefficients,
to show the sharper bound $O(\sqrt{n}\,(1+\log(d/n))^{2})$.}

For $\norm X<1$, let $s(X):=-\log(I-X^{2})-X^{2}= X^{4}/2+ X^{6}/{3}+\cdots$. 
We define 
\begin{equation}
V(X) := b\Tr(X^{2})+\alpha\Tr s(X)\,,\label{eq:general-potential}
\end{equation}
and set $V(X)=+\infty$ outside $\{\norm X<1\}$. We choose (in hindsight)
\begin{equation}
b=16\sqrt{\eta}\,,\qquad\alpha=16/\sqrt{\eta}  \qquad  \text{where }
\eta:=n/r^{2}=L^{-2} (n/d)^{2/3}\,. \label{eq:general-parameters}
\end{equation}
Note that $\eta\ll 1$ (as $n\leq d$  and $L\gg 1$), and it shrinks as $n$ decreases. Intuitively, $V(X)$ still blows up as $\|X\|$ approaches $1$, but has smaller dependence on $X^2$.
Also $V$ is nonnegative,
convex, and even. Thus as before,
$e^{-V(X)} \leq \ind_{\{\norm X<1\}}$, and we focus henceforth on showing that
\begin{equation}
    \label{eq3}
    \E_{\gamma_{n}} e^{-V(X)}\geq e^{-n/10}\,.
\end{equation}

\subsection{Interpolation}
A key idea will be to consider the linear interpolation $X_t = tX$, and track how $\E_{\gamma_{n}} e^{-V(X_t)}$ evolves as $t$ goes from $0$ to $1$. Let us define
\[
Z_{t}=\E_{\gamma_{n}}e^{-V(X_t)}\qquad\text{and}\qquad F(t)=\log Z_t\,.
\]
Then \eqref{eq3} is same as showing $F(1)\geq -n/10$. 

As $F(0)=0$ (as $X_0(g)=0$ for all $g$), it suffices to show that $F'(t) \geq -n/10$ for all $0<t\le1$, so let us compute $F'(t)$.

\begin{lem}
\label{lem:F'(t)} For $0<t\le1$, we have $-F'(t)=\frac{2}{t}\,\{b\Tr(\E_{\mu_{t}}[X^{2}_{t}])+\alpha\Tr(\E_{\mu_{t}}[X^{4}_{t}(I-X_t)^{-1}])\}$, where
\[\D\mu_{t}(g)=\frac{1}{Z_{t}}\,e^{-V(X_{t}(g))}\,\D\gamma_{n}(g)\,.\]
As $V$ is convex, $\mu_t$ is a log-concave tilt of the Gaussian. Moreover as $V$ is even, so is $\mu_t$.  
\end{lem}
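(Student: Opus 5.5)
The plan is to differentiate $F(t)=\log Z_t$ under the integral sign, using the chain rule for trace functions. The properties of $\mu_t$ then follow directly from the structure of $V$.

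\emph{Scalar form of $V$.} Write $V(X)=\Tr f(X)$ for $\norm X<1$, where
\[
f(y)=by^2+\alpha\bigl(-\log(1-y^2)-y^2\bigr)=by^2+\alpha\sum_{k\ge2}\frac{y^{2k}}{k}
\]
on $(-1,1)$, and $f=+\infty$ outside. This $f$ is even and convex, since it is a power series in $y^2$ with nonnegative coefficients, and it increases to $+\infty$ at $\pm1$. Its derivative is
\[
f'(y)=2by+\alpha\Bigl(\frac{2y}{1-y^2}-2y\Bigr)=2by+\frac{2\alpha y^3}{1-y^2}.
\]

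\emph{Derivative of the potential along the ray.} For a trace function, $\frac{d}{dt}\Tr f(tX)=\Tr\bigl(f'(tX)X\bigr)$ by the standard first-order perturbation formula $\frac{d}{dt}\Tr f(Y+tH)=\Tr(f'(Y)H)$. Since $X_t=tX$, this equals $\frac1t\Tr\bigl(f'(X_t)X_t\bigr)$, which is
\[
\frac{2}{t}\Bigl\{b\Tr(X_t^2)+\alpha\Tr\bigl(X_t^4(I-X_t^2)^{-1}\bigr)\Bigr\}.
\]
Here the resolvent comes out as $(I-X_t^2)^{-1}$. I read the $(I-X_t)^{-1}$ in the statement as this, and would state the lemma with $X_t^2$.

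\emph{Differentiating under the integral.} We have $F'(t)=Z_t'/Z_t$ and
\[
Z_t'=-\E_{\gamma_n}\Bigl[e^{-V(X_t)}\,\tfrac{d}{dt}V(X_t)\Bigr],
\]
which gives the claimed formula with expectation under $\mu_t$. This interchange of derivative and expectation is the main obstacle, because of the barrier. On $\{\norm{tX}\ge1\}$ the integrand $e^{-V(X_t)}$ vanishes identically. Near the boundary, the derivative $e^{-V}\Tr(f'(X_t)X)$ stays bounded, since
\[
e^{-V(Y)}\le e^{-\alpha\Tr s(Y)}\le \det(I-Y^2)^{\alpha}e^{\alpha\Tr Y^2},
\]
and the factor $(1-\lambda^2)^{\alpha}$ dominates the single pole $(1-\lambda^2)^{-1}$ in $f'$ because $\alpha\gg1$. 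Hence $t\mapsto e^{-V(tX(g))}$ is $C^1$ on $(0,1]$ for each fixed $g$, with derivative bounded by $C(1+\norm{X(g)})e^{d}$ locally uniformly in $t$. This bound is Gaussian-integrable, so dominated convergence (through the mean value theorem) justifies the interchange. $Z_t>0$ because $e^{-V(X_t)}>0$ on a neighborhood of $g=0$.

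\emph{Properties of $\mu_t$.} The density of $\mu_t$ with respect to $\gamma_n$ is proportional to $e^{-V(X_t(g))}$. The map $g\mapsto X_t(g)=\frac tr\sum_i g_iA_i$ is linear, and $V=\Tr f$ is convex on symmetric matrices, because trace functions of convex scalar functions are convex. So $g\mapsto V(X_t(g))$ is convex with values in $(-\infty,+\infty]$, and $\mu_t$ is a log-concave tilt of $\gamma_n$. Since $f$ is even, $V(-X)=V(X)$, and $X_t(-g)=-X_t(g)$. Together with the symmetry of $\gamma_n$, this shows $\mu_t$ is even.
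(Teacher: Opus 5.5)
Your chain-rule computation of $\partial_t V(X_t)$, your justification for differentiating under the integral, and your proofs that $\mu_t$ is log-concave and even are all correct. The justification of the interchange is more careful than the paper, which skips it. The gap is the step where you declare the $(I-X_t)^{-1}$ in the statement a typo for $(I-X_t^2)^{-1}$, and then prove the lemma only in that form. The statement is correct as written. The integrands $\Tr(X_t^4(I-X_t)^{-1})$ and $\Tr(X_t^4(I-X_t^2)^{-1})$ differ pointwise, but their $\mu_t$-expectations coincide.

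The missing step is the partial-fraction identity
\[
(I-X_t^2)^{-1}=\tfrac12\bigl[(I-X_t)^{-1}+(I+X_t)^{-1}\bigr],
\]
combined with the evenness of $\mu_t$ that you establish in your last paragraph. The map $g\mapsto -g$ preserves $\mu_t$ and sends $X_t\mapsto -X_t$. It therefore fixes $X_t^4$ and swaps $(I+X_t)^{-1}$ with $(I-X_t)^{-1}$. Hence $\E_{\mu_t}\Tr(X_t^4(I+X_t)^{-1})=\E_{\mu_t}\Tr(X_t^4(I-X_t)^{-1})$, and averaging gives exactly the stated formula. The manipulation is legitimate: each trace is of a product of commuting PSD matrices, so it is nonnegative. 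Each is also pointwise at most $2\Tr(X_t^4(I-X_t^2)^{-1})$, whose expectation your domination argument already shows is finite.

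This is not a cosmetic difference. The rest of the paper is built around the one-sided resolvent $R_t=(I-X_t)^{-1}$. It supplies the matrix weight $R\,\D\mu$ in the Brascamp--Lieb inequality and the identity $XRX=R-I-X$. It is also what the fourth-moment bound $\E_{\mu_t}\Tr(X_t^4R_t)$ controls. Converting the symmetric resolvent $(I-X_t^2)^{-1}$ into $R_t$ via evenness is precisely what makes the lemma usable downstream, so you need to prove it rather than restate it away.
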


\begin{proof}
By differentiation,
\[
-F'(t)=-\frac{Z'_{t}}{Z_{t}}=\frac{1}{Z_{t}}\,\E_{\gamma_{n}}[\de_{t}V(X_{t})\cdot e^{-V(X_{t})}]=\E_{\mu_{t}}\de_{t}V(X_{t})\,.
\]
Using $X_{t}=tX$ and $\D(\log \det A(t)) = \Tr (A(t)^{-1}  \,\D A(t))$, a simple calculation gives
\[
\de_{t}V(X_{t})=\frac{2}{t}\,\bbrack{b\Tr X^{2}_{t}+\alpha\Tr\bpar{X^{4}_{t}\,(I-X^{2}_{t})^{-1}}}\,.
\]
The expectation $\E_{\mu_t}\de_{t}V(X_{t})$ can be simplified further. As $(I-X^{2}_{t})^{-1}=\frac{1}{2}\,[(I-X_{t})^{-1}+(I+X_{t})^{-1}]$, and as $\mu_{t}$ is even, $(I-X_t)^{-1}$ and $(I+X_t)^{-1}$ have the same distribution. Hence, one has
\[
\E_{\mu_t}\Tr\bpar{X^4_{t}\,(I-X^{2}_t)^{-1}} = 
\E_{\mu_t} \Tr\bpar{X^{4}_{t}\,(I-X_{t})^{-1}}\,,\]
which gives the claimed expression for $-F'(t)$.
\end{proof}

\paragraph{Moment Bounds.}
Given Lemma \ref{lem:F'(t)}, it remains to bound the terms in $-F'(t)$. 
Let us define $R_{t}:=(I-X_{t})^{-1}$, and let $S_t=\E_{\mu_t}R_t$. Also recall that $\eta = n/r^2$. We will show the following bounds.

\begin{lem}[Moment bounds.]\label{lem:moments}
 For $0\le t\le1$, we have:\vspace{-8pt}
 \begin{enumerate}
     \item (Second moment bound.) $\E_{\mu_t} X_t^2 \preceq S_t-I \preceq 8 \eta t^2 I$, and hence  $\E_{\mu_t} \Tr(X_t^2) \leq 8 \eta t^2 d$.  
     \item (Fourth moment bound.) $\E_{\mu_t}\Tr(X_t^4R_t) \le 128 \eta^2  t^4 d$.
 \end{enumerate}

\end{lem}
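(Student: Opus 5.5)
The plan is to treat $\mu_t$ as a log-concave perturbation of the Gaussian and control everything with the matrix Brascamp--Lieb inequality of Cordero-Erausquin~\cite{CorderoErausquin18}. Write $\D\mu_t \propto e^{-W_t}$ with $W_t(g)=\frac12|g|^2+V(X_t(g))$. Since $V$ is convex and $g\mapsto X_t(g)$ is linear, $\nabla^2 W_t\succeq I$. The matrix Brascamp--Lieb inequality then gives, for any smooth symmetric-matrix-valued $F$, the Poincar\'e-type bound
\[
\E_{\mu_t}F^2-(\E_{\mu_t}F)^2\preceq \E_{\mu_t}\sum_{i=1}^n(\partial_i F)^2 .
\]
We have $\partial_i X_t=\frac{t}{r}A_i$ with $\norm{A_i}\le 1$, and $\sum_i$ has $n$ terms. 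So every application produces a factor $\frac{t^2}{r^2}\cdot n=\eta t^2$, which is exactly the scale in both claims.

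\textbf{Second moment.} First I reduce to $S_t$. Because $\mu_t$ is even, $S_t=\E_{\mu_t}(I-X_t^2)^{-1}$, as in the proof of Lemma~\ref{lem:F'(t)}. Hence
\[
S_t-I=\E_{\mu_t}X_t^2(I-X_t^2)^{-1}\succeq \E_{\mu_t}X_t^2,
\]
where the inequality holds eigenvalue-wise since $\lambda^2/(1-\lambda^2)\ge\lambda^2$. This gives the first inequality. For $S_t-I\preceq 8\eta t^2 I$, apply the Brascamp--Lieb bound to $F=R_t$, using $\partial_iR_t=\frac tr R_tA_iR_t$. I also use the identity $R_t-I=X_tR_t$, so $S_t-I=\E X_tR_t$, and would combine the variance bound with Gaussian integration by parts against $\mu_t$. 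Concretely, $\E_{\mu_t}[g_iG]=\E_{\mu_t}[\partial_i G]-\E_{\mu_t}[G\,\partial_iV(X_t)]$, and the boundary/tilt term has a favorable sign by convexity and evenness of $V$. The aim is a self-bounding inequality of the form
\[
S_t-I\preceq \eta t^2\,\norm{S_t}^{2}\,I \quad(\text{or with }\E R_t^2\text{ controlled by }S_t^2+\mathrm{Var}).
\]
I would then close it by a continuity argument in $t$. At $t=0$ we have $S_0=I$, and $t\mapsto S_t$ is continuous. As long as $\norm{S_t}\le 2$, the inequality gives $S_t-I\preceq 4\eta t^2 I\cdot 2\preceq 8\eta t^2I$, which is $\ll I$ since $\eta\ll1$. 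So the bound $\norm{S_t}\le 2$ never fails. Taking traces yields $\E\Tr X_t^2\le 8\eta t^2 d$.

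\textbf{Fourth moment.} Write $\Tr(X_t^4R_t)=\Tr\bigl(X_t^2\cdot X_t^2R_t\bigr)$. The factor $X_t^2R_t=R_t-I-X_t$ has mean $S_t-I\preceq 8\eta t^2I$ by part 1, again using evenness to kill $\E X_t$. The remaining task is to decouple the two factors. I would bound the covariance term $\E\Tr\bigl((X_t^2-\E X_t^2)(R_t-S_t)\bigr)$ by Cauchy--Schwarz. Each variance is then bounded via Brascamp--Lieb: for $F=X_t^2$, the derivative $\partial_i F=\frac tr(A_iX_t+X_tA_i)$ gives a variance of order $\eta t^2\,\E X_t^2$, i.e.\ of order $\eta^2t^4$ per direction. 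The product of means then contributes $\Tr\bigl(\E X_t^2\,(S_t-I)\bigr)\le 64\eta^2t^4 d$. Adding the covariance term, also of order $\eta^2t^4d$, should give the stated $128\eta^2t^4d$.

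The main obstacle is the closing step in part 1: controlling $\E R_t A_i R_t^2 A_i R_t$ when $R_t$ is unbounded near $\norm{X_t}=1$. The potential $V$ is infinite outside $\{\norm{X}<1\}$, so $\mu_t$ is supported inside, but $R_t$ can still be large there. I would first try to bound this term by $\norm{\E R_t^2}$-type quantities using the $\alpha\Tr s(X)$ part of the barrier, which strongly penalizes the region where $R_t$ is large. Failing that, I would run the bootstrap on $\Tr\E R_t^2$ rather than on $\norm{S_t}$.
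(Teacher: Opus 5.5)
Your first inequality $\E_{\mu_t}X_t^2\preceq S_t-I$ is fine; it is the paper's $\E[XRX]=\E\sum_{k\ge1}X^{2k}\succeq\E X^2$ in another form. The matrix Poincar\'e inequality you state is also valid. The gap is that the bound $S_t-I\preceq 8\eta t^2 I$ is never derived, and that is where the difficulty lies.

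Your Poincar\'e inequality uses only $\nabla^2W_t\succeq I$, so it does not see the barrier, while $R_t$ is unbounded on the support of $\mu_t$. Applied to $F=R_t$, it bounds $\E R_t^2-S_t^2$ by $\frac{t^2}{r^2}\E\sum_iR_tA_iR_t^2A_iR_t$. Gaussian integration by parts turns $S_t-I=\E X_tR_t$ into $\frac{t^2}{r^2}\sum_iA_i\E[R_tA_iR_t]$ plus a tilt term $-\frac{t}{r}\sum_iA_i\E[R_t\,\partial_iV]$. The sign of that tilt term in the PSD order does not follow from convexity and evenness of $V$: that argument works for the scalar $\langle g,\nabla V\rangle$, not with $R_t$ and the $A_i$ interleaved. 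Every step trades a moment of $R_t$ for a higher one, so nothing closes. Neither fallback you propose says how $\alpha\Tr s(X)$ would actually enter.

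The missing idea is to put $R_t$ into the \emph{weight}. The paper applies Cordero-Erausquin's matrix Brascamp--Lieb inequality to the matrix density $M=R_t\,\mu_t$ and proves $\mathcal{K}_M\succeq\frac12(I_n\otimes M)$ in Appendix~\ref{app:curvature}. This is exactly where the barrier is used. The resolvent contributes negative curvature $-[E_jE_i]_{i,j}$, which is absorbed by $H\otimes I_d$ via Lin's block-trace inequality, the AM--GM bound $p_ap_b+2\sqrt{r_ar_b}\ge 3$, and $\alpha\ge 1+b^{-1}$. With this weight and $f=X_tz$ ($z$ a top eigenvector of $S_t$), every term is linear in $R_t$. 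Via $RX=R-I$ and $XRX=R-I-X$, the three terms reduce to $\lambda_t-1$, $(\lambda_t-1)^2/\lambda_t$, and $\frac{t^2}{r^2}\sum_i z^{\T}A_iS_tA_iz\le\eta t^2\lambda_t$. This gives $\lambda_t-1\le 2\eta t^2\lambda_t^2$, which is exactly the self-bounding inequality you guessed; your continuity argument then finishes.

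Part 2 has the same problem, since you need $\E\Tr(R_t-S_t)^2$. The decoupling also loses a factor even if $R_t$ were bounded. By evenness, $\E\Tr(R_t-S_t)^2\ge\E\Tr X_t^2$, which can be of order $\eta t^2 d$. Cauchy--Schwarz against $\E\Tr(X_t^2-\E X_t^2)^2\lesssim\eta^2t^4d$ then gives only $\eta^{3/2}t^3 d$, not the claimed $\eta^2 t^4 d$. To repair this you would first have to discard the odd part $X_t$ and then control moments of $(I-X_t^2)^{-1}$ up to fourth order.

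The paper instead applies the same $R$-weighted inequality to $f_a=X_t^2z_a$ over an eigenbasis of $S_t$:
\begin{itemize}
\item the left-hand sides sum to $\Tr\E[X^2RX^2]$;
\item the mean terms sum to at most $4\eta t^2\Tr(S-I)$;
\item the gradient terms are bounded by $\frac{2t^2}{r^2}(T_1+T_2)$, with $T_2\le n\Tr(S-I)$ and $T_1\le 2n\Tr(S-I)$, using $B=\sum_iA_iX^2A_i\preceq nI$, $BR=B(I+X+XRX)$ and evenness.
\end{itemize}
Everything collapses to $16\eta t^2\Tr(S-I)\le 128\eta^2t^4d$.
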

\paragraph{Finishing the Proof.} This directly gives Theorem \ref{thm:spencer} as follows. 
Indeed by Lemma \ref{lem:F'(t)} and \ref{lem:moments},
\[-F'(t) \leq O(b\eta t d  + \alpha \eta^2 t^3 d) = O(\eta^{3/2}d) =O(n/L^3),\]
where the first equality uses our choice of $b,\alpha$ in \eqref{eq:general-parameters}\footnote{This asymmetric dependence on $\eta$ in the moment bounds is precisely the reason for choosing $b$ and $\alpha$ in this way.  } and $t\in[0,1]$, and the last equality uses the value of $\eta$ from \eqref{eq:general-parameters}.

As $F(0)=0$, choosing $L$ large enough, gives $F(1) \geq -n/10$, implying \eqref{eq3} and hence Theorem \ref{thm:spencer}.

\subsection{Proving Moment Bounds via Matrix Brascamp--Lieb}
Fix some $t \in [0, 1]$. For ease of notation, we suppress $t$ from $X_{t},R_{t},S_t$ and $\mu_{t}$ (unless required). Recall that $R_t =(I-X_t)^{-1}$ and $S_t = \E_{\mu_t} R_t$.
We first state a simple fact.
\begin{lem}
\label{lem:identity} 
$XRX=R-I-X$ and thus $\E_{\mu}[XRX] = S-I$. Moreover, $\E_\mu X^2 \preceq \E_\mu XRX$.
\end{lem}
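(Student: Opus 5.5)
We use only that $X$ is symmetric and, on the support of $\mu$ (where $V<\infty$), satisfies $\norm{X}<1$. Since $R=(I-X)^{-1}$ is a function of $X$, all three matrices $X$, $R$, $I-X$ are simultaneously diagonalizable and commute. The plan is to check each claim spectrally, then take expectations.

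\textbf{The identity.} Write $XRX = X^2 R$ by commutativity. The key observation is
\[
X^2 = I-(I-X)(I+X),
\]
which gives
\[
X^2R=(I-(I-X)(I+X))(I-X)^{-1}=R-(I+X)=R-I-X .
\]
Equivalently, for each eigenvalue $\lambda\in(-1,1)$ one checks $\lambda^2/(1-\lambda)=1/(1-\lambda)-1-\lambda$.

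\textbf{The expectation.} Take $\E_\mu$ of the identity to get $\E_\mu[XRX]=S-I-\E_\mu X$. Lemma~\ref{lem:F'(t)} says $\mu$ is even, and $X(g)$ is linear in $g$, so $X(-g)=-X(g)$. Hence $\E_\mu X=0$, which gives $\E_\mu[XRX]=S-I$.

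Integrability is not an issue: the moments of $g$ under the log-concave $\mu$ are finite, and $R$ is bounded on the support for fixed $t<1$. The only delicate case is $t=1$, where entries of $R$ may be large near the boundary. There either $e^{-V}$ decays fast enough (since $V\ge \alpha\Tr s(X)$ blows up like $-\alpha\log\det(I-X^2)$ with $\alpha$ large), or one takes limits $t\uparrow1$. This is the only step needing any care, and it is routine.

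\textbf{The inequality.} Pointwise, in the common eigenbasis,
\[
XRX-X^2 = X^2\big((1-X)^{-1}-I\big)=X^3(I-X)^{-1},
\]
with eigenvalues $\lambda^3/(1-\lambda)$. These are not all nonnegative, so no pointwise domination holds. We again use evenness. Pairing $g$ with $-g$ shows
\[
\E_\mu X^3R=\E_\mu\Big[\tfrac12 X^3\big((I-X)^{-1}-(I+X)^{-1}\big)\Big]=\E_\mu\big[X^4(I-X^2)^{-1}\big]\succeq 0,
\]
since the eigenvalues $\lambda^4/(1-\lambda^2)$ are nonnegative for $|\lambda|<1$. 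Therefore $\E_\mu XRX-\E_\mu X^2\succeq0$, that is, $\E_\mu X^2\preceq\E_\mu XRX$.

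The main point to get right is this symmetrization step: without evenness the ordering fails pointwise.
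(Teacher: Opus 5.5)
Your proof is correct and is essentially the paper's argument. The identity is the same algebra. Your symmetrization $\E_\mu[X^3R]=\E_\mu[X^4(I-X^2)^{-1}]\succeq 0$ is the closed form of the paper's step, which expands $XRX=\sum_{k\ge 2}X^k$ and uses evenness of $\mu$ to drop the odd powers, so that $\E_\mu[XRX]-\E_\mu X^2=\E_\mu\sum_{k\ge 2}X^{2k}=\E_\mu[X^4(I-X^2)^{-1}]$.

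One small correction to your integrability aside. $R=(I-X_t)^{-1}$ is not bounded on the support of $\mu_t$ for $t<1$, because that support is $\{\norm{X_t}<1\}$, so no value of $t>0$ is special. Your other justification does work for every $t$: $e^{-V}\le e^{\alpha d}\det(I-X_t^2)^{\alpha}$ and $\norm{R}\le 2/\det(I-X_t^2)$, so $\norm{R}\,e^{-V}$ stays bounded since $\alpha\ge 1$.
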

\begin{proof}
As $R\,(I-X)=I$ by definition, we have 
$RX=XR=R-I$. Using this again gives  $X^2R= X (R-I) = R-I-X$.
Taking expectation and using $\E_{\mu}X=0$, gives $S-I=\E_{\mu}[XRX]$.

For the second part, using $R = I+X+ X^2 +\cdots $, multiplying by $X$ on both sides, and taking expectations gives $\E_\mu[XRX]=\E_{\mu}[\sum_{k\geq 1}X^{2k}]\succeq \E_{\mu}[X^{2}]$.
\end{proof}

\paragraph{Matrix Brascamp--Lieb Inequality.}
We now prove Lemma \ref{lem:moments}.
By Lemma \ref{lem:identity}, we need to bound $\E_\mu \Tr (X^2R)$ and $\E_\mu \Tr (X^4 R)$.
The key tool will be the matrix Brascamp--Lieb (BL) inequality~\cite{CorderoErausquin18,AkbasSra26}. 
It says the following. 

Let
$M$ be the matrix density defined on $\R^n$ by $M(g)\,\D g=R(g)\,\D\mu(g)$, 
where $R(g)$ is a $d\times d$ PSD matrix, and $\mu(g)$ is a (scalar) probability density.
For a vector-valued function $f:\R^n \rightarrow \R^d$, denote the weighted mean $\bar{f}_{M}=(\E_{\mu}R)^{-1}\E_{\mu}[Rf]$. Then,
\begin{equation}
~\eqref{eq:matrix-poincare}:\,
\var_{M}f=\E_{\mu}[f^{\T}Rf]-\bar{f}^{\T}_{M}(\E_{\mu}R)\,\bar{f}_{M}\le \frac{1}{\kappa_M}\, \E_{\mu}\Bbrack{\sum^{n}_{i=1}(\de_{i}f)^{\T}R\,(\de_{i}f)}\,.\label{eq:general-matrixBL}
\end{equation}
where $\kappa_M$ is a curvature lower bound for $M$ (see \S\ref{app:matrix-poincare}).

For our purposes with $\D M = R\,\D \mu$, we can use this BL inequality with $\kappa_M=1/2$ . We will verify this curvature condition in \S\ref{app:curvature}; morally, this is because even though $R = (I-X)^{-1}$ is not log-concave, $\mu$ is sufficiently strongly-log-concave (due to Gaussianity and convexity of $V$).

\subsubsection{Bounding Second Moment}
We show that $\E_\mu[XRX] \preceq 8t^2\eta I$ by bounding the largest eigenvalue $\lda_t$ of $S_t$.

Let $z =z_t \in \R^d$ be a unit eigenvector of $S_t$ corresponding to $\lambda_t$. 
We use the matrix BL \eqref{eq:general-matrixBL} to $f=Xz$.
Let us compute each term in BL. 

For the first term, as $\E_{\mu}[XRX]=S-I$, we have $\E_{\mu}[f^{\T}Rf]=z^{\T}(S-I)\,z=\lda_{t}-1$.

For the mean, using $RX=R-I$, we have $\E_{\mu}[Rf]=(\lda_{t}-1)\,z$
and $\bar{f}_{M}=(\lda_{t}-1)\,S^{-1}z$. Hence,
\[
\bar{f}^{\T}_{M}S\bar{f}_{M}=(\lda_{t}-1)^{2}z^{\T}S^{-1}z=(\lda_{t}-1)^{2}/\lda_{t}\,.
\]

For the RHS of BL, as $f = Xz = \frac{t}{r}\, (\sum_i A_ig_i)\,z$, we have $\de_{i}f=\frac{t}{r}\,A_{i}z$, and
as $S \preceq \lda_{t}I$ and $A_i^2 \preceq I$,
\[
\E_{\mu}\Bbrack{\sum^{n}_{i=1}(\de_{i}f)^{\T}R\,(\de_{i}f)}=\frac{t^{2}}{r^{2}} \sum_{i}z^{\T}A_{i}SA_{i}z\le\frac{\lda_{t}nt^{2}}{r^{2}} = \lambda_t t^2 \eta\,.
\]
Therefore, BL gives $\lda_{t}-1-(\lda_{t}-1)^{2}/\lda_{t}\le 2 \eta t^2 \lda_{t}$, which upon rearranging gives
\begin{equation}
\lda_{t}-1\le2\eta t^2 \lda^{2}_{t}\,.\label{eq:general-lambda}
\end{equation}
As $L\ge8$, we have $\eta t^2\le\eta\le\nicefrac{1}{64}$, and
solving the inequality gives
$\lda_{t}\le\frac{2}{1+\sqrt{1-8\eta t^2}}\le 1+8\eta t^2$.
Together with $S\succeq I$, this implies
$0\preceq S_{t}-I\preceq 8t^{2}\eta I\,.$
\subsubsection{Bounding Fourth Moment}
We now handle $\E_{\mu} \Tr[X^{4}R]  =\Tr(\E_{\mu}[X^{4}R])$.
Choose an orthonormal eigenbasis $z_{1},\ldots,z_{d}$ of $S$, with
eigenvalues $\lda_{1},\ldots,\lda_{d}$. For each $a\in[d]$, we set
$f_{a}=X^{2}z_{a}$ so that 
\[
\E_{\mu}[f^{\T}_{a}Rf_{a}]=z^{\T}_{a}\E_{\mu}[X^{2}RX^{2}]\,z_{a}\,.
\]
Since $\Tr(\E_{\mu}[X^{2}RX^{2}])=\sum_{a}\E_{\mu}[f^{\T}_{a}Rf_{a}]$,
we will apply~\eqref{eq:general-matrixBL} to each $f_{a}$ and sum
over $a$.

We now compute the second term in the BL inequality for $f_{a}=X^{2}z_{a}$. 

Using the identity $RX^{2}=R-I-X$ (Lemma~\ref{lem:identity}) and $\E_{\mu}X=0$ gives
\[
\E_{\mu}[Rf_{a}]= \E_\mu [RX^2 z_a]=(\lda_{a}-1)\,z_{a}\]
and thus $\bar{f}_{a,M}=(\lda_{a}-1)\,S^{-1}z_{a}$. So the second term 
\[\bar{f}^{\T}_{a,M}S\bar{f}_{a,M}=(\lda_{a}-1)^{2}\,z^{\T}_{a}S^{-1}z_{a}=(\lda_{a}-1)^{2}/\lda_{a} \leq 4 \eta t^2 \, (\lambda_a-1)\,,\]
where the last step uses that
$(\lambda_a -1)/\lambda_a \leq (\lambda_t -1)/\lambda_t$ which by \eqref{eq:general-lambda} is at most $4 \eta t^2 $ (as $\lambda_t\leq 2$).
Summing over $a$ and using $1\le\lda_{a}\le\lda_t$ gives 
\[
\sum_{a}\bar{f}^{\T}_{a,M}S\bar{f}_{a,M} \le 4\eta t^2 \sum_{a}(\lda_{a}-1)=4\eta t^2 \Tr(S-I)\,.
\]

\paragraph{RHS of the BL Inequality.}
For $f_a = X^2 z_a$, the derivative is $\de_{i}f_{a}=\frac{t}{r}\,(A_{i}X+XA_{i})\,z_{a}$.

Using $(u+w)^{\T}R(u+w)\le2u^{\T}Ru+2w^{\T}Rw$, we have
\[
\sum_{a}\E_{\mu}\Bbrack{\sum_{i}(\de_{i}f_{a})^{\T}R\,(\de_{i}f_{a})}\le\frac{2t^{2}}{r^{2}}\,\underbrace{\E_{\mu}\Bbrack{\sum_{i}\Tr(XA_{i}RA_{i}X)}}_{T_{1}}+\frac{2t^{2}}{r^{2}}\,\underbrace{\E_{\mu}\Bbrack{\sum_{i}\Tr(A_{i}XRXA_{i})}}_{T_{2}}\,.
\]
As $\sum_{i}A^{2}_{i}\preceq nI$, the second term satisfies 
$T_{2}=\E_{\mu}[\Tr(XRX\sum_{i}A^{2}_{i})]\leq n\,\E_{\mu}\Tr(XRX)=n\Tr(S-I)\,.$

For $T_{1}$, we write $T_1 = \E_{\mu}\Tr (BR)$ for  $B:=\sum_{i}A_{i}X^{2}A_{i}$. Since $X^{2}\preceq I$
and $\norm{A_{i}}\leq1$, we have 
$0\preceq B\preceq\sum_{i}A^{2}_{i}\preceq nI$ and $\E_\mu \Tr B \le n\E_\mu \Tr(X^{2}) \leq n\,\E_\mu \Tr(XRX)$.

Now note that $BR = B\,(I+X+XRX)$. The matrix $B$ is even in $g$, so $\E_{\mu}\Tr(BX)=0$.
Also as $B \preceq nI$, we have  $\Tr(BXRX)\leq n\Tr(XRX)$. Thus,
\[
T_{1}=\E_{\mu}\Tr(BR)=\E_{\mu}\Tr B+\E_{\mu}\Tr(BX^{2}R)\le n\,\E_{\mu}\Tr(XRX)+n\,\E_\mu \Tr(XRX)\,.
\]
Hence, $T_{1}\le2n\Tr(S-I)$ due to $\E_\mu[XRX] = S- I$. 
Combining these, the matrix BL implies
\begin{align*}
\Tr(\E_{\mu}[X^{2}RX^{2}]) &\le\sum_{a}\bar{f}^{\T}_{a,M}S\bar{f}_{a,M}+2\sum_{a}\E_{\mu}\Bbrack{\sum_{i}(\de_{i}f_{a})^{\T}R\,(\de_{i}f_{a})} \\
& \le16\eta t^2 \Tr(S-I)\le128d\eta^2 t^4 \,.\label{eq:general-quartic}
\end{align*}

\appendix

\section{Matrix Brascamp--Lieb Inequality\label{app:matrix-poincare}}

In this section, we first recall the standard Brascamp--Lieb inequality and then its natural matrix counterpart introduced by Cordero-Erausquin~\cite[Theorem~4]{CorderoErausquin18}.
Throughout this appendix, $\D g$ denotes Lebesgue measure.

\subsection{Weighted Means, Variance, and Curvature}

For a positive integrable scalar density $w=e^{-\phi}$ on $\Rn$,
write $\bar{h}_{w}=(\int w)^{-1}\int wh$. The standard Brascamp--Lieb
inequality reads that when $\hess\phi\succ0$,
\[
\var_{w}h:= \int(h - \bar{h}_w)^2\,w\,\D g \le\int\inner{\bpar{\hess\phi}^{-1}\grad h,\grad h}\,w\,\D g\,,
\]
In particular, $\hess\phi\succeq\kappa I_{n}\succ0$ implies the Poincar\'e
inequality; $\var_{w}h\leq\kappa^{-1}\int\norm{\grad h}^{2}\,w\,\D g$.

For the matrix extension, let $\Omega\subseteq\Rn$ be nonempty, open,
and convex, and let $M:\Omega\to\pd$ be a smooth matrix weight with
$\int_{\Omega}\norm M\,\D g<\infty$. The matrix BL is obtained by
replacing the scalar density $w$ by the matrix density $M$. For
$f:\Omega\to\Rd$, the weighted mean is 
\[
\bar{f}_{M}=\mc Z^{-1}_{M}\int_{\Omega}Mf\,\D g\qquad\text{for }\quad \mc Z_{M}:=\int_{\Omega}M\,\D g\,.
\]
 The weighted variance is
\[
\var_{M}f=\int_{\Omega}(f-\bar{f}_{M})^{\T}M\,(f-\bar{f}_{M})\,\D g=\int_{\Omega}f^{\T}Mf\,\D g-\bar{f}^{\T}_{M}\mc Z_{M}\bar{f}_{M}\,.
\]
We now need the `` $\hess(-\log M)$''. To this end, Cordero-Erausquin
defined the curvature operator $\Theta^{M}:(\Rd)^{n}\to(\Rd)^{n}$
as follows: for $U=(u_{1},\dots,u_{n})\in(\Rd)^{n}$
\[
(\Theta^{M}U)_{i}=\sum^{n}_{j=1}\de_{i}(M^{-1}\de_{j}M)\,u_{j}=-\sum^{n}_{j=1}M^{-1}\,\bpar{\underbrace{(\de_{i}M)\,M^{-1}(\de_{j}M)-\de_{ij}M}_{=:(\mc K_{M})_{ij}}}\,u_{j}\,.
\]
Note that for a scalar weight $M=e^{-\phi}$, this reduces to $-\Theta^{M}=\hess\phi$. 

On block vectors $U=(u_{1},\ldots,u_{n})$ and $W=(w_{1},\ldots,w_{n})$
in $(\Rd)^{n}$, we define
\[
G=I_{n}\otimes M\qquad\text{and}\qquad\inner{U,W}_{G}=\sum^{n}_{i=1}u^{\T}_{i}Mw_{i}\,.
\]
Let us deduce the symmetric block-matrix representation of $\Theta^{M}$
with respect to the metric $G$:
\[
\inner{U,(-\Theta^{M})\,U}_{G}=\sum^{n}_{i=1}\sum^{n}_{j=1}u^{\T}_{i}\bpar{(\de_{i}M)\,M^{-1}(\de_{j}M)-\de_{ij}M}\,u_{j}\,.
\]
Hence, w.r.t.\ $G=I_{n}\otimes M$, the symmetric block-matrix representation
of $-\Theta^{M}$ is 
\[
\mc K_{M}:=-(I_{n}\otimes M)\,\Theta^{M}=[(\de_{i}M)\,M^{-1}(\de_{j}M)-\de_{ij}M]^{n}_{i,j=1}\,.
\]
All block matrices here have $n\times n$ blocks of size $d\times d$.

When $\Omega=\Rn$ and $-\Theta^{M}$ is positive definite in the
$G$-metric\footnote{$A$ is called PD in the $G$-metric if $A^\T G=GA$ and $\inner{u,Au}_{G}=u^{\T}(GA)u>0$ for $u\neq 0$.},
the matrix BL inequality gives
\[
\var_{M}f\le\int_{\Rn}\inner{(-\Theta^{M})^{-1}\grad f,\grad f}_{G}\,\D g\,.
\]
We use the following convex-domain version proven in \cite[Lemma~2.3]{AkbasSra26}.
\begin{lem}[Matrix-weighted Poincaré inequality]
 \label{lem:matrix-poincare} Suppose that $\mc K_{M}\succeq\kappa G$
on convex $\Omega$ for some $\kappa>0$. Then, every $C^{1}$ function
$f:\Omega\to\Rd$ with $\int_{\Omega}f^{\T}Mf\,\D g<\infty$ satisfies
\begin{equation}
\var_{M}f\le\frac{1}{\kappa}\int_{\Omega}\norm{\nabla f}^{2}_{G}\,\D g=\frac{1}{\kappa}\int\sum^{n}_{i=1}(\de_{i}f)^{\T}M\,(\de_{i}f)\,\D g\,.\label{eq:matrix-poincare}
\end{equation}
\end{lem}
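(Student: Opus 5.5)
We prove Lemma~\ref{lem:matrix-poincare} by the $L^2$ (H\"ormander) method, following the scalar proof of Brascamp--Lieb with the matrix metric $G$ in place of the scalar weight. First reduce to $\bar f_M=0$: replacing $f$ by $f-\bar f_M$ changes neither side, because subtracting a constant vector does not change $\nabla f$. Next introduce the weighted Laplacian acting on $\Rd$-valued functions,
\[
\mc L u := M^{-1}\sum_{i=1}^n \de_i\bpar{M\,\de_i u} = \sum_i \de_{ii}u + \sum_i M^{-1}(\de_i M)\,\de_i u\,,
\]
with Neumann boundary condition $\sum_i \nu_i M\de_i u=0$ on $\de\Omega$. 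Integration by parts gives the symmetry identity $\int v^{\T} M(-\mc L u)\,\D g=\int\sum_i(\de_i v)^{\T}M(\de_i u)\,\D g$, so $-\mc L$ is a nonnegative self-adjoint operator on $L^2(M)$ whose kernel consists of the constants. Since $\bar f_M=0$ means $f$ is $M$-orthogonal to the constants, we can solve $-\mc L u=f$. I will first assume $f$ and $M$ are smooth with enough decay, or that $\Omega$ is bounded and smooth, and remove this by approximation at the end.

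The core step is a matrix Bochner identity. Differentiating $\mc L u$ in $x_j$ produces commutator terms of the form $\de_j(M^{-1}\de_i M)\,\de_i u$, and these are exactly the entries of $\Theta^M$. Pairing with $M\de_j u$ and integrating should give
\[
\int f^{\T}Mf=\int (\mc L u)^{\T}M(\mc L u)=\int \norm{\nabla^2 u}^2_{G}+\int\inner{\nabla u,(-\Theta^M)\nabla u}_G - \text{(boundary term)},
\]
where $\norm{\nabla^2u}_G^2=\sum_{ij}(\de_{ij}u)^{\T}M\de_{ij}u\ge0$. The boundary term involves the second fundamental form of $\de\Omega$, and it has the favorable sign when $\Omega$ is convex; this is the standard Reilly/Kadlec argument, and it goes through unchanged because $M\succ0$ only enters through a pointwise positive quadratic form. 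The hypothesis $\mc K_M\succeq\kappa G$ then yields $\int f^{\T}Mf\ge\kappa\int\norm{\nabla u}^2_G$.

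To conclude, write $\var_M f=\int f^{\T}M(-\mc Lu)=\int\sum_i(\de_i f)^{\T}M\de_i u$. Cauchy--Schwarz in the $G$-inner product bounds this by $\bpar{\int\norm{\nabla f}_G^2}^{1/2}\bpar{\int\norm{\nabla u}_G^2}^{1/2}$. The Bochner bound gives $\int\norm{\nabla u}_G^2\le\kappa^{-1}\var_M f$, and substituting and squaring gives \eqref{eq:matrix-poincare}.

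The main obstacle is the Bochner identity itself. Since $M$ is matrix-valued and does not commute with its derivatives, one has to check that the commutator terms assemble exactly into $\mc K_M$ as defined, namely $(\de_iM)M^{-1}(\de_jM)-\de_{ij}M$, rather than into some nonsymmetric variant. The other difficulty is the regularity and existence of the solution $u$ on a general convex $\Omega$. I would handle both by approximating $\Omega$ from inside by smooth bounded convex domains and $f$ by smooth compactly supported functions, applying the inequality there, and then passing to the limit using $\int\norm M<\infty$ and monotone convergence.
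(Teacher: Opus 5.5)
Your proposal is correct and follows essentially the route the paper indicates. The paper does not reprove the lemma; it cites \cite[Lemma~2.3]{AkbasSra26} and remarks that Cordero-Erausquin's weighted Bochner ($L^2$) argument carries over to convex $\Omega$ because convexity only adds a favorable term, which is exactly your Neumann-problem/Bochner/Cauchy--Schwarz argument. The two points you flagged also hold: the commutator $\de_j\mc L u=\mc L(\de_j u)+(\Theta^M\nabla u)_j$ assembles into exactly $\mc K_M$, and the Reilly boundary term is $\int_{\de\Omega}\Tr\bpar{M\,(\nabla u)^{\T}\,\mathrm{II}\,(\nabla u)}\ge0$, since it is the trace of a product of two PSD matrices.
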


The proof in \cite{CorderoErausquin18} relies on the weighted Bochner
argument, and in such proof, a convex domain only adds a ``favorable''
term in satisfying target conditions, so the convex version of the
matrix BL is plausible, and \cite{AkbasSra26} essentially confirms
this.

\section{Curvature of the Barrier\label{app:curvature}}

We prove $\mc K_{M}\succeq\frac{1}{2}\,(I_{n}\otimes M)$ (in Lemma~\ref{lem:matrix-poincare})
of the barrier \eqref{eq:general-potential}. 

\paragraph{Preliminary computations.}

Fix $t$, omit its subscript, and write
\[
D_{i}:=\de_{i}X=\frac{t}{r}\,A_{i}\,,\qquad R=(I-X)^{-1}\,,\qquad M=\rho R\,,
\]
where $-\log\rho=\nicefrac{\norm g^{2}}{2}+V(X)+O(1)$. For $i,j\in [n]$, we set
\[
H_{ij}:=\de_{i}\de_{j}V\bpar{X(g)}\qquad\text{and}\qquad E_{i}:=R^{1/2}D_{i}R^{1/2}\in \Rdd\,.
\]

Using $M^{-1}\de_{j}M=R^{-1}\de_{j}R+(\de_{j}\log\rho)\,I$, direct
differentiation gives
\begin{equation}
\mc K_{M}=\rho\mc K_{R}+\bpar{\hess(-\log\rho)}\otimes M=\rho\mc K_{R}+(I_{n}+H)\otimes M\,.\label{eq:ordinary-split}
\end{equation}
Standard differentiation leads to $\de_{i}R  =RD_{i}R$, so
\begin{equation}\label{eq:resolvent-curvature}
    (\mc K_R)_{ij} = (\de_{i}R)R^{-1}\,(\de_{j}R) - \de_{ij}R
= RD_{i}RD_{j}R - RD_{i}RD_{j}R - RD_{j}RD_{i}R = -RD_{j}RD_{i}R\,.
\end{equation}
Combining~\eqref{eq:ordinary-split} and~\eqref{eq:resolvent-curvature},
we divide by $\rho$ and conjugate each block by $R^{-1/2}$: 
\[
\frac{1}{\rho}\,(I_{n}\otimes R^{-1/2})\,\mc K_{M}\,(I_{n}\otimes R^{-1/2})=I_{nd}+H\otimes I_{d}-[E_{j}E_{i}]_{i,j}\,.
\]
Since $I_{n}\otimes M$ becomes $I_{nd}$ after the conjugate, $\mc K_{M}\succeq \frac12 \,(I_n \otimes M)$ is equivalent to 
\begin{equation}
[E_{j}E_{i}]_{i,j}\preceq H\otimes I_{d}+\frac{1}{2}\,I_{nd}\,.\label{eq:modified-normalized}
\end{equation}

Recall $V(X)=b\Tr(X^{2})+\alpha\Tr s(X)$ for $s(x):=-\log(1-x^{2})-x^{2}$, and write $E_{i}=D_{i}+\Delta_{i}$, where $\Delta_{i}:=R^{1/2}D_{i}R^{1/2}-D_{i}$.
We then bound the three terms in $[E_{j}E_{i}]_{i,j}=[D_{j}D_{i}]_{i,j}+[\Delta_{j}\Delta_{i}]_{i,j}+[D_{j}\Delta_{i}+\Delta_{j}D_{i}]_{i,j}$ as follows: 
\begin{align*}
T_{1}:=[D_{j}D_{i}]_{i,j} & \preceq \frac{nt^{2}}{r^{2}}\,I_{nd}\preceq \frac{1}{64}\,I_{nd}\,,\\
T_{2}:=[\Delta_{j}\Delta_{i}]_{i,j} & \preceq[\hess_{g}\Tr s(X)]\otimes I_{d}\,,\\
T_{3}:=[D_{j}\Delta_{i}+\Delta_{j}D_{i}]_{i,j} & \preceq\bpar{b\,[\Tr(D_{i}D_{j})]_{i,j}+\frac{1}{b}\,\hess_{g}\Tr s(X)}\otimes I_{d}\,.
\end{align*}
Since $\hess\Tr(X^{2}) = 2\,[\Tr(D_{i}D_{j})]_{i,j}$ by direct computation, the RHS of~\eqref{eq:modified-normalized} is
\[
H\otimes I_{d}+\frac{1}{2}\,I_{nd}=\bpar{2b\,[\Tr(D_{i}D_{j})]_{i,j}+\alpha\hess_{g}\Tr s(X)}\otimes I_{d}+\frac{1}{2}\,I_{nd}\,.
\]
Hence, we couple (i) $\frac{nt^{2}}{r^{2}}\,I_{nd}$ with $\half\,I_{nd}$, (ii)
$(1+b^{-1})\,[\hess_{g}\Tr s(X)]\otimes I_{d}$ with $\alpha\,[\hess_{g}\Tr s(X)]\otimes I_{d}$
(since $1+b^{-1}\leq\alpha$), and (iii) $b\,[\Tr(D_{i}D_{j})]_{i,j}\otimes I_{d}$
with $2b\,[\Tr(D_{i}D_{j})]_{i,j}\otimes I_{d}$.

\paragraph{Bounding $T_{1}$.}

Since $D_{i}=\frac{t}{r}\,A_{i}$ and $\norm{A_{i}}\le1$, we have
$\norm{\sum_{i}D^{2}_{i}}\leq\frac{nt^{2}}{r^{2}}$. For
arbitrary $z_{1},\ldots,z_{n}\in\Rd$, Cauchy--Schwarz yields 
\[
\sum_{i,j}z^{\T}_{i}D_{j}D_{i}z_{j}=\sum_{i,j}\inner{D_{j}z_{i},D_{i}z_{j}}\le\sum_{i,j}\norm{D_{j}z_{i}}^{2}\le\veps_{t}\sum_{i}\norm{z_{i}}^{2}\,.
\]

\paragraph{Bounding $T_{2}$.}

For $T=[T_{ij}]^{n}_{i,j=1}$ with $d\times d$ blocks, \cite[Proposition~2.1]{Lin16} states that if $[T_{ij}]_{i,j}\succeq0$, then $[\Tr(T_{ji})\,I_{d}-T_{ji}]_{i,j}\succeq0$\footnote{This is a block generalization of $T\preceq\Tr T\cdot I_{d}$ when
$T\succeq0$.}.
 Applying this to $T_{ij}=\Delta_{i}\Delta_{j}$ yields
$[\Delta_{j}\Delta_{i}]_{i,j}\preceq[\Tr(\Delta_{i}\Delta_{j})]_{i,j}\otimes I_{d}$.
Thus, it suffices to show
\begin{equation}
[\Tr(\Delta_{i}\Delta_{j})]_{i,j}\preceq \hess_{g}\Tr s(X)\,.\label{eq:modified-hessian-direction}
\end{equation}

Let us compute the Hessian of $\Tr s(X)$. Define $V_{-}=-\log\det(I-X)$
and $V_{+}=-\log\det(I+X)$. Then, $\Tr s(X)=V_{-}+V_{+}-\Tr X^{2}$.
Direct differentiation yields $\de_{j}V_{-} =\Tr(RD_{j})$ and $\de_{ij}V_{-} =\Tr(RD_{i}RD_{j})=\Tr(E_{i}E_{j})$, and the similar calculation for $V_{+}$ leads to
\[
\de_{ij}\Tr s(X)=\Tr(RD_{i}RD_{j})+\Tr\bpar{(I+X)^{-1}D_{i}\,(I+X)^{-1}D_{j}}-2\Tr(D_{i}D_{j})\,.
\]

Diagonalize $X$, with eigenvalues $x_a$, and set $r_{a}=(1-x_{a})^{-1}$ and $p_{a}=(1+x_{a})^{-1}$. In this basis, $R=\diag(r_{1},\ldots,r_{d})$ and $(I+X)^{-1}=\diag(p_{1},\ldots,p_{d})$, so
\[
\de_{ij}\Tr s(X)=\sum_{a,b}(r_{a}r_{b}+p_{a}p_{b}-2)\,(D_{i})_{ab}(D_{j})_{ab}\,.
\]
For $\Tr(\Delta_{i}\Delta_{j})$, we have $(\Delta_{i})_{ab}=(\sqrt{r_{a}r_{b}}-1)(D_{i})_{ab}$. 
Since $\Delta_{i}$ is symmetric,
\[
\Tr(\Delta_{i}\Delta_{j})=\sum_{a,b}(\Delta_{i})_{ab}(\Delta_{j})_{ab}=\sum_{a,b}(\sqrt{r_{a}r_{b}}-1)^{2}\,(D_{i})_{ab}(D_{j})_{ab}\,.
\]

We now compare quadratic forms. For $u\in\Rn$, using $\sum_{i,j}u_{i}u_{j}\,(D_{i})_{ab}(D_{j})_{ab}=(\sum_{i}u_{i}\,(D_{i})_{ab})^{2}$,
\begin{align*}
u^{\T}[\hess_{g}\Tr s(X)]\,u & =\sum_{a,b}(r_{a}r_{b}+p_{a}p_{b}-2)\,\Bpar{\sum_{i}u_{i}\,(D_{i})_{ab}}^{2}\,,\\
u^{\T}[\Tr(\Delta_{i}\Delta_{j})]_{i,j}\,u & =\sum_{a,b}(\sqrt{r_{a}r_{b}}-1)^{2}\,\Bpar{\sum_{i}u_{i}\,(D_{i})_{ab}}^{2}\,.
\end{align*}
Hence, it suffices to show that $r_{a}r_{b}+p_{a}p_{b}-2\ge(\sqrt{r_{a}r_{b}}-1)^{2}$ (equivalently, $p_{a}p_{b}+2\sqrt{r_{a}r_{b}}\geq 3$).
Apply AM-GM to the three positive numbers $p_{a}p_{b}$,
$\sqrt{r_{a}r_{b}}$, and $\sqrt{r_{a}r_{b}}$: 
\[
p_{a}p_{b}+2\sqrt{r_{a}r_{b}}\ge3\,(p_{a}p_{b}r_{a}r_{b})^{1/3}\underset{(i)}{\geq}3\,,
\]
where $(i)$ follows from $\abs{x_{a}},\abs{x_{b}}<1$ and $p_{a}p_{b}r_{a}r_{b}=\frac{1}{(1-x^{2}_{a})(1-x^{2}_{b})}\ge1$.

\paragraph{Bounding $T_{3}$.}
By the simple block Cauchy--Schwarz~\cite[Lemma 3.5]{AkbasSra26}, for block vectors $z_i \in \Rd$,
\[
\sum_{i,j}z^{\T}_{i}(D_{j}\Delta_{i}+\Delta_{j}D_{i})\,z_{j}
= 2\sum_{i,j}z_i^\T D_j \Delta_i z_j \leq 2\,
\Bpar{\sum_{i,j}\Tr(D_i D_j)\,z_i^\T z_j}^{1/2}
\Bpar{\sum_{i,j}\Tr(\Delta_i \Delta_j)\,z_i^\T z_j}^{1/2}\,.
\]
Using Young's inequality (i.e., $2\,(xy)^{1/2}\leq cx + c^{-1}y$) and \eqref{eq:modified-hessian-direction},
\[
[D_{j}\Delta_{i}+\Delta_{j}D_{i}]_{i,j}\preceq b\,[\Tr(D_{i}D_{j})]_{i,j}\otimes I_{d}+\frac{1}{b}\,[\Tr(\Delta_{i}\Delta_{j})]_{i,j}\otimes I_{d}
\preceq b\,[\Tr(D_{i}D_{j})]_{i,j}\otimes I_{d}+\frac{1}{b}\,\hess_{g}\Tr s(X)\otimes I_{d}\,.
\]
\bibliographystyle{alpha}
\bibliography{main}
\end{document}

%% file: math-macros.tex
\input{_macros.tex}
\global\long\def\on#1{\operatorname{#1}}%

\global\long\def\bw{\mathsf{Ball\ walk}}%
\global\long\def\sw{\mathsf{Speedy\ walk}}%
\global\long\def\gw{\mathsf{Gaussian\ walk}}%
\global\long\def\ps{\mathsf{Proximal\ sampler}}%
\global\long\def\dw{\mathsf{Dikin\ walk}}%

\global\long\def\chr{\mathsf{Coordinate\ Hit\text{-}and\text{-}Run}}%
\global\long\def\har{\mathsf{Hit\text{-}and\text{-}Run}}%
\global\long\def\gc{\mathsf{Gaussian\ cooling}}%
\global\long\def\ino{\mathsf{\mathsf{In\text{-}and\text{-}Out}}}%
\global\long\def\tgc{\mathsf{Tilted\ Gaussian\ cooling}}%
\global\long\def\PS{\mathsf{PS}}%
\global\long\def\psunif{\mathsf{PS}_{\textup{unif}}}%
\global\long\def\psexp{\mathsf{PS}_{\textup{exp}}}%
\global\long\def\psann{\mathsf{PS}_{\textup{ann}}}%
\global\long\def\psgauss{\mathsf{PS}_{\textup{Gauss}}}%
\global\long\def\eval{\mathsf{Eval}}%
\global\long\def\mem{\mathsf{Mem}}%

\global\long\def\O{O}%
\global\long\def\Otilde{\widetilde{O}}%
\global\long\def\Omtilde{\widetilde{\Omega}}%

\global\long\def\E{\mathbb{E}}%
\global\long\def\Z{\mathbb{Z}}%
\global\long\def\P{\mathbb{P}}%
\global\long\def\N{\mathbb{N}}%

\global\long\def\R{\mathbb{R}}%
\global\long\def\Rd{\mathbb{R}^{d}}%
\global\long\def\Rdd{\mathbb{R}^{d\times d}}%
\global\long\def\Rn{\mathbb{R}^{n}}%
\global\long\def\Rnn{\mathbb{R}^{n\times n}}%

\global\long\def\psd{\mathbb{S}^{d}_{+}}%
\global\long\def\pd{\mathbb{S}^{d}_{++}}%

\global\long\def\defeq{\stackrel{\mathrm{{\scriptscriptstyle def}}}{=}}%

\global\long\def\veps{\varepsilon}%
\global\long\def\lda{\lambda}%
\global\long\def\vphi{\varphi}%
\global\long\def\K{\mathcal{K}}%

\global\long\def\half{\frac{1}{2}}%
\global\long\def\nhalf{\nicefrac{1}{2}}%
\global\long\def\texthalf{{\textstyle \frac{1}{2}}}%
\global\long\def\ltwo{L^{2}}%

\global\long\def\ind{\mathds{1}}%
\global\long\def\op{\mathsf{op}}%
\global\long\def\ch{\mathsf{Ch}}%
\global\long\def\kls{\mathsf{KLS}}%
\global\long\def\ts{\mathsf{Ts}}%
\global\long\def\hs{\textup{HS}}%
\global\long\def\ls{\textup{LS}}%

\global\long\def\cpi{C_{\mathsf{PI}}}%
\global\long\def\cipi{C_{\mathsf{IPI}}}%
\global\long\def\clsi{C_{\mathsf{LSI}}}%
\global\long\def\cch{C_{\mathsf{Ch}}}%
\global\long\def\clch{C_{\mathsf{logCh}}}%
\global\long\def\cexp{C_{\mathsf{exp}}}%
\global\long\def\cgauss{C_{\mathsf{Gauss}}}%

\global\long\def\chooses#1#2{_{#1}C_{#2}}%

\global\long\def\frob{\on F}%

\global\long\def\vol{\on{vol}}%

\global\long\def\sym{\on{sym}}%

\global\long\def\law{\on{law}}%

\global\long\def\tr{\on{tr}}%

\global\long\def\diag{\on{diag}}%

\global\long\def\diam{\on{diam}}%

\global\long\def\poly{\on{poly}}%

\global\long\def\polylog{\on{polylog}}%

\global\long\def\Diag{\on{Diag}}%

\global\long\def\inter{\on{int}}%

\global\long\def\esssup{\on{ess\,sup}}%

\global\long\def\proj{\on{Proj}}%

\global\long\def\e{\mathrm{e}}%

\global\long\def\id{\mathrm{id}}%

\global\long\def\supp{\on{supp}}%

\global\long\def\spanning{\on{span}}%

\global\long\def\rows{\on{row}}%

\global\long\def\cols{\on{col}}%

\global\long\def\rank{\on{rank}}%

\global\long\def\T{\mathsf{T}}%

\global\long\def\bs#1{\boldsymbol{#1}}%

\global\long\def\eu#1{\EuScript{#1}}%

\global\long\def\mb#1{\mathbf{#1}}%

\global\long\def\mbb#1{\mathbb{#1}}%

\global\long\def\mc#1{\mathcal{#1}}%

\global\long\def\mf#1{\mathfrak{#1}}%

\global\long\def\ms#1{\mathscr{#1}}%

\global\long\def\mss#1{\mathsf{#1}}%

\global\long\def\msf#1{\mathsf{#1}}%

\global\long\def\cQ{\msf Q}%
\global\long\def\cV{\msf V}%
\global\long\def\cC{\msf C}%

\global\long\def\textint{{\textstyle \int}}%
\global\long\def\Dd{\mathrm{D}}%
\global\long\def\D{\mathrm{d}}%
\global\long\def\grad{\nabla}%
 
\global\long\def\hess{\nabla^{2}}%
 
\global\long\def\lapl{\triangle}%
 
\global\long\def\deriv#1#2{\frac{\D#1}{\D#2}}%
 
\global\long\def\pderiv#1#2{\frac{\partial#1}{\partial#2}}%
 
\global\long\def\de{\partial}%
\global\long\def\lagrange{\mathcal{L}}%
\global\long\def\Div{\on{div}}%

\global\long\def\Gsn{\mathcal{N}}%
 
\global\long\def\BeP{\textnormal{BeP}}%
 
\global\long\def\Ber{\textnormal{Ber}}%
 
\global\long\def\Bern{\textnormal{Bern}}%
 
\global\long\def\Bet{\textnormal{Beta}}%
 
\global\long\def\Beta{\textnormal{Beta}}%
 
\global\long\def\Bin{\textnormal{Bin}}%
 
\global\long\def\BP{\textnormal{BP}}%
 
\global\long\def\Dir{\textnormal{Dir}}%
 
\global\long\def\DP{\textnormal{DP}}%
 
\global\long\def\Exp{\textnormal{Exp}}%
 
\global\long\def\Gam{\textnormal{Gamma}}%
 
\global\long\def\GEM{\textnormal{GEM}}%
 
\global\long\def\HypGeo{\textnormal{HypGeo}}%
 
\global\long\def\Mult{\textnormal{Mult}}%
 
\global\long\def\NegMult{\textnormal{NegMult}}%
 
\global\long\def\Poi{\textnormal{Poi}}%
 
\global\long\def\Pois{\textnormal{Pois}}%
 
\global\long\def\Unif{\textnormal{Unif}}%

\global\long\def\bpar#1{\bigl(#1\bigr)}%
\global\long\def\Bpar#1{\Bigl(#1\Bigr)}%

\global\long\def\abs#1{|#1|}%
\global\long\def\babs#1{\bigl|#1\bigr|}%
\global\long\def\Babs#1{\Bigl|#1\Bigr|}%

\global\long\def\snorm#1{\|#1\|}%
\global\long\def\bnorm#1{\bigl\Vert#1\bigr\Vert}%
\global\long\def\Bnorm#1{\Bigl\Vert#1\Bigr\Vert}%

\global\long\def\sbrack#1{[#1]}%
\global\long\def\bbrack#1{\bigl[#1\bigr]}%
\global\long\def\Bbrack#1{\Bigl[#1\Bigr]}%

\global\long\def\sbrace#1{\{#1\}}%
\global\long\def\bbrace#1{\bigl\{#1\bigr\}}%
\global\long\def\Bbrace#1{\Bigl\{#1\Bigr\}}%

\global\long\def\Abs#1{\left\lvert #1\right\rvert }%
\global\long\def\Par#1{\left(#1\right)}%
\global\long\def\Brack#1{\left[#1\right]}%
\global\long\def\Brace#1{\left\{  #1\right\}  }%

\global\long\def\inner#1{\langle#1\rangle}%
 
\global\long\def\binner#1#2{\left\langle {#1},{#2}\right\rangle }%

\global\long\def\norm#1{\lVert#1\rVert}%
\global\long\def\onenorm#1{\norm{#1}_{1}}%
\global\long\def\twonorm#1{\norm{#1}_{2}}%
\global\long\def\infnorm#1{\norm{#1}_{\infty}}%
\global\long\def\fronorm#1{\norm{#1}_{\text{F}}}%
\global\long\def\nucnorm#1{\norm{#1}_{*}}%
\global\long\def\staticnorm#1{\|#1\|}%
\global\long\def\statictwonorm#1{\staticnorm{#1}_{2}}%

\global\long\def\mmid{\mathbin{\|}}%

\global\long\def\otilde#1{\widetilde{O}(#1)}%
\global\long\def\wtilde{\widetilde{W}}%
\global\long\def\wt#1{\widetilde{#1}}%

\global\long\def\KL{\msf{KL}}%
\global\long\def\dtv{d_{\textrm{\textup{TV}}}}%
\global\long\def\FI{\msf{FI}}%
\global\long\def\tv{\msf{TV}}%
\global\long\def\TV{\msf{TV}}%

\global\long\def\cov{\on{cov}}%
\global\long\def\var{\on{Var}}%
\global\long\def\ent{\on{Ent}}%

\global\long\def\cred#1{\textcolor{red}{#1}}%
\global\long\def\cblue#1{\textcolor{blue}{#1}}%
\global\long\def\cgreen#1{\textcolor{green}{#1}}%
\global\long\def\ccyan#1{\textcolor{cyan}{#1}}%
\global\long\def\yk#1{\textcolor{red}{\textsf{[YK: #1]}}}%
\global\long\def\yb#1{\textcolor{blue}{\textsf{[yb: #1]}}}%

\global\long\def\iff{\Leftrightarrow}%
 
\global\long\def\textfrac#1#2{{\textstyle \frac{#1}{#2}}}%

\global\long\def\Expo{\textnormal{Expo}}%
\global\long\def\Tr{\on{Tr}}%
\global\long\def\onu{\bar{\nu}}%
\global\long\def\intk{\inter\K}%
\global\long\def\ncal{\mathcal{N}}%
\global\long\def\svec{\operatorname{svec}}%
\global\long\def\tvec{\operatorname{vec}}%
\global\long\def\del{\partial}%
\global\long\def\ovec{\operatorname{ovec}}%

\global\long\def\Sph{\mathbb{S}}%
\global\long\def\gap{\operatorname{gap}}%
\global\long\def\Id{\mathrm{Id}}%
\global\long\def\HR{\mathrm{HR}}%
\global\long\def\CHAR{\mathrm{CHAR}}%
\global\long\def\logp{\log_{+}}%

%% file: _macros.tex
\def\balign#1\ealign{\begin{align}#1\end{align}}
\def\baligns#1\ealigns{\begin{align*}#1\end{align*}}
\def\balignat#1\ealign{\begin{alignat}#1\end{alignat}}
\def\balignats#1\ealigns{\begin{alignat*}#1\end{alignat*}}
\def\bitemize#1\eitemize{\begin{itemize}#1\end{itemize}}
\def\benumerate#1\eenumerate{\begin{enumerate}#1\end{enumerate}}

\newenvironment{talign*}
 {\let\displaystyle\textstyle\csname align*\endcsname}
 {\endalign}
\newenvironment{talign}
 {\let\displaystyle\textstyle\csname align\endcsname}
 {\endalign}

\def\balignst#1\ealignst{\begin{talign*}#1\end{talign*}}
\def\balignt#1\ealignt{\begin{talign}#1\end{talign}}

\let\originalleft\left
\let\originalright\right
\renewcommand{\left}{\mathopen{}\mathclose\bgroup\originalleft}
\renewcommand{\right}{\aftergroup\egroup\originalright}

\def\Gronwall{Gr\"onwall\xspace}
\def\Holder{H\"older\xspace}
\def\Ito{It\^o\xspace}
\def\Nystrom{Nystr\"om\xspace}
\def\Schatten{Sch\"atten\xspace}
\def\Matern{Mat\'ern\xspace}

\def\tinycitep*#1{{\tiny\citep*{#1}}}
\def\tinycitealt*#1{{\tiny\citealt*{#1}}}
\def\tinycite*#1{{\tiny\cite*{#1}}}
\def\smallcitep*#1{{\scriptsize\citep*{#1}}}
\def\smallcitealt*#1{{\scriptsize\citealt*{#1}}}
\def\smallcite*#1{{\scriptsize\cite*{#1}}}

\def\blue#1{\textcolor{blue}{{#1}}}
\def\green#1{\textcolor{green}{{#1}}}
\def\orange#1{\textcolor{orange}{{#1}}}
\def\purple#1{\textcolor{purple}{{#1}}}
\def\red#1{\textcolor{red}{{#1}}}
\def\teal#1{\textcolor{teal}{{#1}}}

\def\mbi#1{\boldsymbol{#1}} 
\def\mbf#1{\mathbf{#1}}
\def\mrm#1{\mathrm{#1}}
\def\tbf#1{\textbf{#1}}
\def\tsc#1{\textsc{#1}}

\def\mbiA{\mbi{A}}
\def\mbiB{\mbi{B}}
\def\mbiC{\mbi{C}}
\def\mbiDelta{\mbi{\Delta}}
\def\mbif{\mbi{f}}
\def\mbiF{\mbi{F}}
\def\mbih{\mbi{g}}
\def\mbiG{\mbi{G}}
\def\mbih{\mbi{h}}
\def\mbiH{\mbi{H}}
\def\mbiI{\mbi{I}}
\def\mbim{\mbi{m}}
\def\mbiP{\mbi{P}}
\def\mbiQ{\mbi{Q}}
\def\mbiR{\mbi{R}}
\def\mbiv{\mbi{v}}
\def\mbiV{\mbi{V}}
\def\mbiW{\mbi{W}}
\def\mbiX{\mbi{X}}
\def\mbiY{\mbi{Y}}
\def\mbiZ{\mbi{Z}}

\def\textsum{{\textstyle\sum}} 
\def\textprod{{\textstyle\prod}} 
\def\textbigcap{{\textstyle\bigcap}} 
\def\textbigcup{{\textstyle\bigcup}} 

\def\reals{\mathbb{R}} 
\def\integers{\mathbb{Z}} 
\def\rationals{\mathbb{Q}} 
\def\naturals{\mathbb{N}} 
\def\complex{\mathbb{C}} 

\def\what#1{\widehat{#1}}

\def\twovec#1#2{\left[\begin{array}{c}{#1} \\ {#2}\end{array}\right]}
\def\threevec#1#2#3{\left[\begin{array}{c}{#1} \\ {#2} \\ {#3} \end{array}\right]}
\def\nvec#1#2#3{\left[\begin{array}{c}{#1} \\ {#2} \\ \vdots \\ {#3}\end{array}\right]} 

\def\maxeig#1{\lambda_{\mathrm{max}}\left({#1}\right)}
\def\mineig#1{\lambda_{\mathrm{min}}\left({#1}\right)}

\def\Re{\operatorname{Re}} 
\def\indic#1{\mbb{I}\left[{#1}\right]} 
\def\logarg#1{\log\left({#1}\right)} 
\def\polylog{\operatorname{polylog}}
\def\maxarg#1{\max\left({#1}\right)} 
\def\minarg#1{\min\left({#1}\right)} 
\def\Earg#1{\E\left[{#1}\right]}
\def\Esub#1{\E_{#1}}
\def\Esubarg#1#2{\E_{#1}\left[{#2}\right]}
\def\bigO#1{\mathcal{O}\left(#1\right)} 
\def\littleO#1{o(#1)} 
\def\P{\mbb{P}} 
\def\Parg#1{\P\left({#1}\right)}
\def\Psubarg#1#2{\P_{#1}\left[{#2}\right]}
\def\Trarg#1{\Tr\left[{#1}\right]} 
\def\trarg#1{\tr\left[{#1}\right]} 
\def\Var{\mrm{Var}} 
\def\Vararg#1{\Var\left[{#1}\right]}
\def\Varsubarg#1#2{\Var_{#1}\left[{#2}\right]}
\def\Cov{\mrm{Cov}} 
\def\Covarg#1{\Cov\left[{#1}\right]}
\def\Covsubarg#1#2{\Cov_{#1}\left[{#2}\right]}
\def\Corr{\mrm{Corr}} 
\def\Corrarg#1{\Corr\left[{#1}\right]}
\def\Corrsubarg#1#2{\Corr_{#1}\left[{#2}\right]}
\newcommand{\info}[3][{}]{\mathbb{I}_{#1}\left({#2};{#3}\right)} 
\newcommand{\staticexp}[1]{\operatorname{exp}(#1)} 
\newcommand{\loglihood}[0]{\mathcal{L}} 


\providecommand{\arccos}{\mathop\mathrm{arccos}}
\providecommand{\dom}{\mathop\mathrm{dom}}
\providecommand{\diag}{\mathop\mathrm{diag}}
\providecommand{\tr}{\mathop\mathrm{tr}}
\providecommand{\card}{\mathop\mathrm{card}}
\providecommand{\sign}{\mathop\mathrm{sign}}
\providecommand{\conv}{\mathop\mathrm{conv}} 
\def\rank#1{\mathrm{rank}({#1})}
\def\supp#1{\mathrm{supp}({#1})}

\providecommand{\minimize}{\mathop\mathrm{minimize}}
\providecommand{\maximize}{\mathop\mathrm{maximize}}
\providecommand{\subjectto}{\mathop\mathrm{subject\;to}}

\def\openright#1#2{\left[{#1}, {#2}\right)}

\ifdefined\nonewproofenvironments\else
\ifdefined\ispres\else
 
\fi
\fi
\makeatletter
\@addtoreset{equation}{section}
\makeatother
\def\theequation{\thesection.\arabic{equation}}

\newcommand{\cmark}{\ding{51}}

\newcommand{\xmark}{\ding{55}}

\newcommand{\eq}[1]{\begin{align}#1\end{align}}
\newcommand{\eqn}[1]{\begin{align*}#1\end{align*}}
\newcommand{\Ex}[1]{\mathbb{E}\left[#1\right]}